\let\c@author\relax
\newtheorem{theorem}{Theorem}
\newtheorem{lemma}[theorem]{Lemma}
\theoremstyle{definition}
\newtheorem{definition}[theorem]{Definition}
\theoremstyle{remark}
\definecolor{RWTHblue}{RGB}{0,83,159}
\definecolor{RWTHblack}{RGB}{0,0,0}
\definecolor{RWTHwhite}{RGB}{255,255,255}
\definecolor{RWTHlightblue}{RGB}{142,186,226}
\definecolor{RWTHgrey}{RGB}{51,51,51}
\definecolor{RWTHlightgrey}{RGB}{204,204,204}
\definecolor{RWTHsuperlightgrey}{RGB}{247,247,247}
\definecolor{RWTHpetrol}{RGB}{0,97,101}
\definecolor{RWTHteal}{RGB}{0,152,161}
\definecolor{RWTHmaygreen}{RGB}{189,205,0}
\definecolor{RWTHgreen}{RGB}{87,171,39}
\definecolor{RWTHyellow}{RGB}{255,237,0}
\definecolor{rwthorange}{RGB}{246,168,0}
\definecolor{RWTHmagenta}{RGB}{227,0,102}
\definecolor{RWTHred}{RGB}{204,7,30}
\definecolor{RWTHbordeaux}{RGB}{161,16,53}
\definecolor{RWTHviolet}{RGB}{97,33,88}
\definecolor{RWTHpurple}{RGB}{122,111,172}
\definecolor{dan-blue}{RGB}{0,84,159}
\definecolor{dan-darkblue}{RGB}{25,25,200}
\definecolor{dan-red}{RGB}{204,7,30}
\definecolor{dan-darkred}{RGB}{100,2,10}
\definecolor{dan-yellow}{RGB}{234,231,159}
\definecolor{dan-green}{RGB}{87,171,39}
\definecolor{superRed}{RGB}{176,5,5}
\newcommand{\setword}[2]{%
  \phantomsection
  #1\def\@currentlabel{\unexpanded{#1}}\label{#2}%
}
\renewcommand{\P}{\mathcal{P}}
\newcommand{\nP}{P}
\newcommand{\priv}{\P^*}         
\newcommand{\potrmm}{\P^\mathrm{R}}
\newcommand{\R}{\mathcal{R}}
\newcommand{\nR}{R}
\newcommand{\T}{\mathcal{T}}
\newcommand{\nT}{T}
\newcommand{\fp}{\P^{\mathrm{f}}}  
\newcommand{\nfp}{F}  
\renewcommand{\mp}{\P^{\mathrm{m}}}  
\newcommand{\nmp}{M}  
\newcommand{\rpold}{\mathcal{F}}          
\newcommand{\F}{\mathcal{F}}          
\newcommand{\powerset}{\mathbb{P}}
\newcommand{\Prt}{\P^{\ass}}
\newcommand{\M}{\mathcal{M}}
\renewcommand{\S}{\mathcal{S}}
\newcommand{\arr}{a}    
\newcommand{\dis}{d}    
\newcommand{\age}{e}    
\newcommand{\pref}{w}
\newcommand{\ftrans}{f^\mathrm{trans}}    
\newcommand{\fpriv}{f^\mathrm{priv}}      
\newcommand{\fpref}{f^\mathrm{\pref}}      
\newcommand{\wmin}{\pref^{\mathrm{min}}}
\newcommand{\rc}{c}     
\newcommand{\ass}{z}    
\newcommand{\smax}{s^{\max}}
\newcommand{\los}{\mathrm{los}}
\newcommand{\cl}{\ell}
\newcommand{\inst}{\mathcal{I}}
\newcommand{\bigabs}[1]{\big|#1\big|}
\renewcommand{\j}{\ensuremath{j}\xspace}
\newcommand{\biggTwo}{\bBigg@{3}}
\newcommand{\biggFive}{\bBigg@{7.5}}
\DeclarePairedDelimiter\ceil{\lceil}{\rceil}
\DeclarePairedDelimiter\floor{\lfloor}{\rfloor}
\newcommand{\N}{\mathbb{N}}
\renewcommand{\O}{\mathcal{O}}
\newcommand{\abs}[1]{|#1|}
\newcommand{\pluseq}{\mathrel{{+}{=}}}
\definecolor{rwth-blue}{RGB}{0,83,159}
\definecolor{rwth-red}{RGB}{255,20,20}
\definecolor{rwth-lred}{RGB}{255,60,60}
\definecolor{rwth-green}{RGB}{87,171,39}
\definecolor{rwthorange}{RGB}{246,168,0}
\definecolor{RWTHorange}{RGB}{246,168,0}
\definecolor{rwthmaigruen}{RGB}{189,205,0}
\definecolor{rwthtuerkis}{RGB}{0,152,161}
\definecolor{RWTHred}{RGB}{204,7,30}
\definecolor{RWTHmaygreen}{RGB}{189,205,0}
\definecolor{combi-green}{RGB}{180,210,50}
\definecolor{combi-orange}{RGB}{250,170,30}
\newcommand{\fcolor}{rwthorange}
\newcommand{\mcolor}{rwthtuerkis}
\newcommand{\colorOne}{rwth-lred}
\newcommand{\colorTwo}{rwth-blue}
\tikzset{square/.style={regular polygon, regular polygon sides=4}}
\tikzset{patient/.style={square,fill=#1,draw=#1, minimum width=0.95cm}}
\tikzset{stay/.style={#1,line width=4.5pt}}
\tikzset{room/.style={square,fill=none,draw=black,minimum width=0.90cm}}
\tikzset{note/.style={draw=none,fill=none}}
\tikzset{%
	round/.style={circle,draw=black, fill=none, inner sep = 2.0pt},
	square/.style={regular polygon,regular polygon sides=4,draw=black, fill=none, inner sep = 2.0pt},
	Text/.style={draw=none,fill=none},
	kanten1/.style={color=\colorOne, ultra thick},
	kanten2/.style={color=\colorTwo, ultra thick}
}
\newcommand{\patient}[4]{
    \foreach \i [evaluate=\i as \m using \i-1] in {1,...,#2}{
        \node[patient=#1](#3\i) at ($#4+(\m*1,0)$) {}; 
    }   
    \ifnum#2>1
        \foreach \i [evaluate=\j as\m using \i-1] in {2,...,#2}{
            \draw[stay=#1] (#3\m) -- (#3\i);
        }   
    \fi 
}
\begin{document}
\begin{frontmatter}
\title{Combinatorial and Computational Insights about Patient-to-room Assignment under Consideration of Roommate Compatibility}
\author[aac]{Tabea Brandt\corref{cor1}\fnref{fn1}}
\ead{brandt@combi.rwth-aachen.de}
\author[aac]{Christina B\"using\corref{cor1}\fnref{fn1}} 
\ead{buesing@combi.rwth-aachen.de}
\author[aac]{Felix Engelhardt\corref{cor1}\fnref{fn1}}
\ead{engelhardt@combi.rwth-aachen.de} 

\address[aac]{Chair of Combinatorial Optimization, RWTH Aachen University, \\
Ahornstraße~55, 52074 Aachen, Germany}
\fntext[1]{
This work was supported
by the German research council (DFG) Research Training Group 2236 UnRAVeL.
Declarations of interest: none
}
\cortext[cor1]{Corresponding author}

\journal{arXiv.org}
%
\begin{abstract}
During a hospital stay, a roommate can significantly influence a patient's overall experience both positivly and negatively.
Therefore, hospital staff tries to assign patients together to a room that are likely to be compatible.
However,
there are more conditions and objectives to be respected by the
patient-to-room assignment (PRA), e.g.,
ensuring gender separated rooms and avoiding transfers.
In this paper, we review the literature for reasons why roommate compatibility is important as well as for criteria that can help to increase the probability that two patients are suitable roommates.
We further present combinatorial insights about computing patient-to-room assignments with optimal overall roommate compatibility.
We then compare different IP-formulations for PRA as well as the influence of different scoring functions for patient compatibility on the runtime of PRA  integer programming (IP) optimisation.
Using these results and real-world data, we conclude this paper by developing and evaluating a fast IP-based solution approach for the dynamic PRA.
\end{abstract}
\begin{keyword}
combinatorial optimization \sep hospital bed management \sep patient-to-room assignment \sep mixed integer programming \sep patient admission scheduling \sep dynamic planning
\end{keyword}
\end{frontmatter}

\section{Introduction}
In hospitals, patients tend to spent the majority of their time not in surgery, but in their bed in their designated hospital room. 
The \ac{pra} problem is the operational problem of allocating patients to such hospital rooms while respecting room capacities, hospital policies and medical constraints. 
The \ac{pra} was first formulated by Demeester et al.~\cite{Demeester_2007}. Their definition of \ac{pra} already contains one type of user preference, i.e., patient preferences regarding double/single room choice~\cite{Demeester_2007}. This important special case together with the role of patient transfers is covered in detail by Büsing et al.~\cite{PRAComBIP}. 

In this work, we systematically consider different patients' preferences regarding suitable roommates as optimization objectives. 
However, what does constitute a suitable roommate is by no means an easy question.
Andersen et al. performed a meta-synthesis of qualitative studies on the subject; they identify three categories of how patients experience their roommate: as an enforced companion, as an expert on illness and hospital life, and/or as a care provider~\cite{Andersen_2012}. Such a companion can be the source of grief, as found in Adams et al.'s work on noise and sleep quality~\cite{Adams_2024}, or a roommate can be source of company (and loss of privacy), as found by Roos et al.~\cite{Roos_2009}.

Patient roommate preferences also link in with the ongoing scientific debate about the advantages and disadvantages of single vs. multiple bed room ward designs (e.g., see \cite{vandeGlind_2007,Persson_2012,Persson_2015}), which is not the focus of this work. However, we would like note that many factors considered advantages or disadvantages of multi bed rooms, especially concerning social aspects, are highly dependent on the specific roommate chosen.

It is important that roommate preferences are not just preferences, but translate into healthcare outcomes. A recent study by Sehgal, who analyses a large hospital dataset with respect to the impact of socio-demographic factors and clinical condition a patient's roommates on clinical outcomes, found that these factors influence patients length of stay~\cite{Sehgal_2023}.
An older but similar result is by Kulik et al., who showed that having a pre- or post-operative roommate can have an impact on patients' healthcare outcomes~\cite{Kulik1993StressAA,Kulik1996SocialCA}. In talks with practitioners, we found healthcare issues with "patient conflicts", e.g., having multiple patients with asthma in the same room are to be avoided.

Additionally, there is also an economic case to be made for considering patient preferences in \ac{pra}. Luther et al. look at patient roommates as a type customer-to-customer interaction. They find that this interaction affects overall patient satisfaction with a hospital~\cite{Luther_2016}, which is validated with a follow-up study by Hantel and Benkenstein~\cite{Hantel_2020}. 
Underlining this, in a quantitative data analysis, Young and Chen find that "Hospitality experiences create a halo effect of patient goodwill, while medical excellence and patient safety do not."~\cite{Young_2020}. This fits hospital practice, where practitioners tend to group patients together by age, types and/or severity of illnesses to improve patient satisfaction~\cite{Overflow}.
Thus, assigning patient roommates that are in line with their preferences can increase overall patient satisfaction at little additional cost. This is important, as patient satisfaction is not only an indicator for quality of care~\cite{clearly_1998}, but also closely linked to patient loyalty~\cite{Kessler_2011,Liu_2021}.

The remainder of the paper is structured as follows:
We first give a short overview over related literature in \Cref{sec:literature}, and formally define \ac{pra} in \Cref{sec:def}.
In \cref{sec:pref:complexity}, we consider the complexity of finding a feasible patient-to-room assignment with optimal roommate suitability and show that such a solution can be found in polynomial time.
Then, in \Cref{sec:pref:patient-fit}, we categorize existing research on patient preferences from a mathematical modelling perspective, and derive both general classes and specific examples of preference functions.
In \cref{sec:pref:lp}, we propose and computationally compare different \ac{ip} formulations for \ac{pra} under consideration of roommate compatibility.
Then, in \Cref{sec:pref:dyn} we consider different integer programming formulations for modelling preference objectives. These formulations are then compared in computational study.
Finally, we end with a short conclusion in \cref{sec:conclusion} focusing on implications for practical modelling and further research.

\section{Literature}\label{sec:literature}

There exists various literature regarding hospital bed management.
For example, it was suggested in~\cite{Utilisation} that health-care spending can be reduced by planning the utilisation of hospital resources efficiently.
In~\cite{simu1,simu2,simu3}, simulation models were developed to help evaluate such planning strategies.
Furthermore, a decision support system for bed-assignments has been proposed in~\cite{1995} which aims to mimic decisions that are performed by professional hospital staff.

The specific problem that we consider and expand on in this paper was introduced and modelled by~\citet{Demeester_2010} in 2010: the operational task of assigning patients to hospital-rooms subject to hospital policies, where both the wishes of patients and medical nursing needs are considered in order to determine how well each room is suited for a patient.
This problem is known as the patient-to-room assignment problem.

In the original definition,\citet{Demeester_2010} assumed that both the admission and departure date of each patient is known in advanced.
Ceschia and Schaerf expanded on this task in \cite{LocalSearch} by considering emergency patients as well, for which these dates are not known until the patients are admitted.
\todo[inline]{Add remarks on which kind of patient preferences Demeester et al. consider in their objective function}

Since 2010, various different approaches to solve \ac{pra} have been proposed, such as a tabu search algorithm \cite{Demeester_2010}, a metaheuristic \cite{OfflinePat}, a simulated annealing algorithm \cite{LocalSearch} and a hybrid simulated annealing algorithm \cite{HybridAnn}.
Furthermore, many variations of \ac{pra} have been proposed: in \cite{LocalSearch}, the problem has been extended to a dynamic context to which the authors adapted their solution technique that was proposed to the static problem.
Vancroonenburg et al. proposed an integer linear program to solve a similar dynamic extension of the \ac{pra} in \cite{DecSupport}, by considering uncertainty of patients' length of stay as well as emergency patients whose arrival date is only known at the time when they are admitted to the hospital.
An issue that occurs for \ac{pra} in a dynamic context is that the quality of the solution for the whole time horizon (long-term objective) is often bad when repeatedly optimising the assignment of newly registered patients (short-term objective).
In \cite{ShortLongTerm}, Yi-Hang Zhu et al. studied the compatibility of such short-term and long-term objectives. Schäfer et al. proposed a decision support model in \cite{Overflow} that anticipates trade-off between the objective value and handling overflow situations created both by arrivals of emergency patients and the uncertainty of the patients' length of stay, based on historical probability distributions.
Schmidt et al. examined efficiency of methods to solve \ac{pra} with elective patients, where uncertainty of each patient's length of stay is considered in \cite{Schmidt}.
This was done by modelling each patient's length of stay by a log-normally distributed random variable for each patient.
Representing this problem as a binary integer program, three heuristics, an exact approach and a mixed integer program solver were introduced and compared. 
In \cite{OfflinePat}, a detailed comparison of variations of the problem that is considered in different papers is given.
While patients' preferences for room-equipment or the layout of rooms have been considered for the patient-to-room assignment already, e.g., in \cite{Demeester_2010,DecSupport,Overflow,LocalSearch,OfflinePat}, preferences regarding room-mates have not yet been modelled for \ac{pra}.
In real-life situations, such wishes are sometimes considered by hospital staff \cite{Roomies}.

Further and independently of PRA, there are various papers that examine which preferences patients have regarding room-layout and equipment, such as \cite{RoomEq2,RoomEq3,RoomEq4}. 
For example, evidence suggests that in general, patients experience a more pleasant stay if they are assigned into single- or double-rooms, according to \cite{Zufriedenheit}. However, only little research about preferences regarding room-mates can be found. 
In \cite{Conversations}, patient-preferences of US-veterans were analysed. It was observed that the most positive aspect for patients in shared rooms is conversations with room-mates. Therefore, it is desirable for such patients to be assigned room-mates that share similar interests. While there are many factors that determine how well patients get along, these are hard to establish beforehand for hospital staff. However, one factor that can easily be taken into account is the age of each patient. In \cite{AgeDifference}, it was observed that a large age difference between room-mates sharing a hospital room is often undesirable for patients. 

\section{Problem Definition}\label{sec:def}
\todo[inline]{introduce maximization of single-room requests}
Mathematically, we consider \ac{pra}, over a finite planning horizon $\T:=\left\{1,\hdots,\nT\right\}$ and a set of rooms $\R=\left\{1,\hdots,\nR\right\}$.
Each room $r\in\R$ has a capacity $\rc_r\in\N$ which is equal to its number of beds located in $r$.
Further, we have a set $\P=\left\{1,\hdots,\nP\right\}$ of patients.
The set $\P$ is the disjoint union of female patients $\fp$ and male patients $\mp$, i.e., $\P=\fp\cup \mp$.
Each patient $p\in \P$ has an arrival period $\arr_p\in \T$ as well as a discharge period $\dis_p\in \T$.
We assume that a patient does no longer need a bed in this last time period and define the set of time periods for which patient $p$ has to be assigned to a bed as
\[\T(p):=\{t\in\T \mid \arr_p\leq t < \dis_p\}.\]
Based on the arrival and discharge dates of each patient, we denote the set of patients that need to be assigned to a room during time period $t\in\T$ as
\[\P(t):=\{p\in\P \mid t\in \T(p)\}.\]
Analogously, we define for any subset $S\subseteq\P$ of patients the respective subset of patients who need a room in time period $t\in\T$ as $S(t):=S\cap\P(t)$.
We suppose that both the arrival date and the discharge date is fixed in advance.

We further assume that we have a black box function $\pref$ which efficiently computes
a natural value that denotes the roommate fit.
We define $\pref$ over the set of all subsets of patients, i.e., the power set $\powerset(\P)$ of $\P$ which is defined as
\[ \powerset(P):=\{S\subseteq\P\}.\]
We then define the preference function $\pref$ as
\[\pref:\powerset(P)\rightarrow\N_0,S\mapsto \pref(S),\]
where $\pref(S)=0$ means the patients in $S$ fit perfectly together as roommates.
In general, specifying $\pref(S)$ is a highly complex task in real life.
We elaborate possible criteria that can be taken into account in \cref{sec:pref:patient-fit} and discuss respective properties of $\pref$.
Remark that it is also possible to define $\pref$ such that a higher value corresponds to a better fit.
For the structural analysis in this paper, we prefer our definition.

The task in \ac{pra} is to allocate every patient $p\in\P$ to a hospital room $r\in\R$ for each $t\in\T(p)$ so that each room accommodates only either male or female patients and the rooms' capacities are respected while maximizing the overall roommate fit and minimizing the total number of transfers.
That means we search an assignment
\[\ass:\{(p,t)\in\T\times\R\mid t\in\T(p)\}\rightarrow\R\]
subject to:
\begin{enumerate}
    \item Each patient $p\in \P$ is assigned to exactly one room $r\in\R$ in each time period $t\in\T(p)$, i.e., $\ass$ is well defined.
    \item The rooms' capacities are respected in every time period, i.e.,
        \[\abs{\Prt(r,t)}:= \abs{\{p\in\P\mid\ass(p,t)=r\}}\leq c_r\quad\forall r\in\R,~t\in\T.\]
    \item Female and male patients never share a room, i.e.,
        \[\ass(\fp,t)\cap\ass(\mp,t)=\emptyset\quad\forall t\in\T.\]
    \item The overall roommate fit is as good as possible, i.e., we minimize
        \[\fpref(z):=\sum_{r\in\R}\sum_{t\in\T} \pref(\Prt(r,t)).\]
    \item We need as few transfers of patients between rooms as possible, i.e., we minimize
        \[\ftrans(\ass):=\sum_{p\in\P}\left|\left\{t\in\T(p)\setminus\{\arr_p\}: \ass(p,t)\neq\ass(p,t-1)\right\}\right|\]
\end{enumerate}
\section{Complexity}
\label{sec:pref:complexity}

In this section, we first define the task of determining an optimal roommate choice for a specific day as a combinatorial optimization problem: the \acf{rmp}.
We then examine the underlying combinatorial structure and expose connections to other known combinatorial problems.
Remark that we can compute the best roommate-fit value independently for every single time period (since we allow arbitrary many transfers).


Therefore, we consider in this section only one arbitrary but fixed time period $t\in\T$
and abbreviate the set of female patients who are in hospital in time period $t$ with
\[\F:=\fp(t),\]
and respectively the set of male patients needing a bed in time period $t$ with
\[\M:=\mp(t).\]
We further denote their cardinality with $\nfp:=\abs{\F}$ and $\nmp:=\abs{\M}$, and with
\[\nR_\rc:=\abs{\{r\in\R\mid \rc_r=\rc\}}\]
the number of rooms with a specific capacity $\rc\in\N$.

We define the problem of finding an optimal partition of patients into roommates in time period $t$ as follows.
\begin{definition}[\ac{rmp}]
Let a set of female patients $\F$, a set of male patients $\M$, a set of rooms $\R$ with capacities $\rc_r\in\N$ for $r\in\R$, and a scoring function $\pref:\powerset(\P)\rightarrow\N_0$ be given.
In the \acf{rmp}, the task is then to find a collection $\S$ of subsets of patients that represent a feasible patient-to-room assignment with optimal patient fit, i.e.,
\[\S:= \{S_r\subseteq \P \mid r\in \R\},\]
so that
\begin{enumerate}[i)]
    \item every patient is assigned to at most one room, i.e., $S_r\cap S_u=\emptyset$ f.a. $r\in\R$,\label{pref:itm:exone}
    \item every patient is assigned to a room, i.e., $\bigcup_{r\in\R} S_r = \F\cup\M$,\label{pref:itm:everypat}
    \item rooms are separated by sex, i.e., $S_r\cap \F=\emptyset$ or $S_r\cap\M=\emptyset$ f.a. $r\in\R$,\label{pref:itm:sex}
    \item the room capacities are respected, i.e., $\abs{S_r}\leq \rc_r$ f.a. $r\in\R$,\label{pref:itm:cap}
    \item the overall patient fit is optimal, i.e., $\pref(\S):=\sum_{r\in\R} \pref(S_r)$ is minimal.\label{pref:itm:opt}
\end{enumerate}
\end{definition}

Remark that we can check the feasibility of a given \ac{rmp} instance as described in \cite{brandt2024privates}.
We therefore concentrate in the remaining part of this section on instances whose feasibility we can check in polynomial time and especially on the case of room capacities $\rc_r\in\{1,2\}$.
Since \ac{rmp} is trivial for instances with only single rooms, we further assume that at least one double exists, i.e., $\nR_2\geq 1$.
Then, we can find an optimal solution to a feasible \ac{rmp} instance by solving a \acf{mwpm}:
Let a feasible \ac{rmp} instance $\inst$ be given, we then construct a weighted and undirected graph $G_\inst=(V,E,\cl)$ as follows.
We construct a vertex for every patient and $k:=2\nR - \nfp - \nmp$ auxiliary vertices representing single rooms and beds that stay empty, i.e.,
\[ V:=\F\cup\M\cup \underbrace{\{v_1,\ldots,v_k\}}_{=:X}.\]
Then, edges of a perfect matching in $G_\inst$ can connect
\begin{enumerate}[i)]
    \item two patient-vertices, representing them as roommates;
    \item a patient vertex and an auxiliary vertex, representing that the patient is alone in a room;
    \item two auxiliary vertices, representing an empty room.
\end{enumerate}

However, we have to ensure that the number of matching edges connecting two patient vertices is less or equal to the number of double rooms.
We therefore compute the number $\alpha$ of single rooms that have to be used as
\[\alpha := \max\left\{\nfp+\nmp-2\nR_2,\ 0\right\}.\]
We then use the first $\alpha$ auxiliary vertices to ensure that there are at least $\alpha$ edges connecting patient vertices with auxiliary vertices in every perfect matching.
For easier lucidity, we denote the set of these vertices by $X_1:=\{v_1,\ldots,v_\alpha\}$ and the remaining auxiliary vertices as $X_2:=\{v_{\alpha+1},\ldots,v_k\}$.
Remark that $X_1=\emptyset$ if $\alpha=0$.

We construct three types of edges, cf.~\cref{fig:pref:ginst}:
between all vertices that represent potential roommates, between all patient vertices  and auxiliary vertices, and between all vertices in $X_2$, i.e.,
\begin{align*}
E:= ~&\{pq \mid p,q\in\F \text{ or } p,q\in\M\}\\
\cup~&\{px \mid p\in \F\cup\M,\ x\in X\}\\
\cup~&\{xy\mid x,y\in X_2\}.
\end{align*}

\begin{figure}[tbh]
\begin{center}
\resizebox{0.3\textheight}{!}{
\begin{tikzpicture}
\contourlength{2pt}
\node[circle, draw=\fcolor, inner sep=0.5cm,pattern=crosshatch, pattern color=\fcolor](F) at (0,0) {\contour{white}{\textcolor{black}{$K_F$}}};
\node[circle, draw=\mcolor, inner sep=0.5cm,pattern=crosshatch, pattern color=\mcolor](M) at (4,0) {\contour{white}{\textcolor{black}{$K_M$}}};
\node[ellipse, draw=black, inner sep=0.5cm,minimum width=3cm,pattern=crosshatch, pattern color=black](X2) at (3,4) {};
\node[ellipse, draw=black, inner sep=0.75cm, minimum width=6cm](X) at (2,4) {};
\node[circle, draw=black, fill=black, inner sep=1.0pt, label=above:{\contour{white}{$v_1$}}](v1) at (-0.5,4){};
\node[](s) at (0.25,4){$\ldots$};
\node[](tX) at (4,5){$X$};
\node[](tX2) at (1.8,4.6){\contour{white}{$X_2$}};
\node[circle, draw=black, fill=black, inner sep=1.0pt, label=above:{\contour{white}{$v_\alpha$}}](v2) at (1,4){};
\draw (M) -- (X2) -- (F);
\draw (M.north east) -- (X2) -- (F.east);
\draw (M.north) -- (X2) -- (F.north);
\draw (M) -- (v1) -- (F);
\draw (M.west) -- (v1) -- (F.west);
\draw (M.north) -- (v1) -- (F.north);
\draw (M) -- (v2) -- (F);
\draw (M.west) -- (v2) -- (F.east);
\draw (M.north) -- (v2) -- (F.north);
\end{tikzpicture}
}
\end{center}
\caption{Graph $G_\inst$}
\label{fig:pref:ginst}
\end{figure}
We define the cost function $\cl$ using the patient-fit function $\pref$:
\[ \cl: E \rightarrow \N_0, uv \mapsto \begin{cases}
    \pref(\{u,v\}), &\text{if } u,v\in \F\cup\M\\
    \pref(\{v\}), &\text{if } v\in \F\cup\M,\ u\in X\\
    \pref(\emptyset), & \text{otherwise}.
    \end{cases}\]

We prove that an \ac{rmp} instance $\inst$ can be solved by computing a minimum-weighted perfect matching in $G_\inst$ in two steps.
First, we prove that $G_\inst$ always contains a perfect matching (if $\inst$ is feasible).
Second, we show how a perfect matching in $G_\inst$ corresponds to a solution of \ac{rmp}.
\begin{lemma}
Let $\inst$ be a feasible \ac{rmp} instance with room capacities $\rc_r\in\{1,2\}$ and at least one double room. The graph $G_\inst$ then contains a perfect matching.
\end{lemma}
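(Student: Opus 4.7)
My plan is to construct a perfect matching of $G_\inst$ directly from any feasible RMP solution $\S = \{S_r : r \in \R\}$. Classify the rooms according to their occupancy under $\S$: let $b$ be the number of rooms holding two patients, $a$ the number of rooms holding exactly one patient, and $e$ the number of empty rooms. I would then translate each room type into matching edges: each double-occupancy room contributes a patient--patient edge (which lies in $E$ because the sex-separation condition forces both occupants to be of the same sex), each singleton room contributes a patient--auxiliary edge, and each empty room contributes an edge between two auxiliary vertices.

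The delicate part is distributing the auxiliary vertices correctly. Since vertices in $X_1$ are adjacent only to patient vertices, they must be matched to patients, and in particular the $e$ edges representing empty rooms must lie entirely inside $X_2$. Accordingly, I would first match the $\alpha$ vertices of $X_1$ to $\alpha$ of the $a$ singleton patients, then match the remaining $a - \alpha$ singletons to distinct vertices of $X_2$, and finally pair up the remaining $2e$ vertices of $X_2$ using the clique edges within $X_2$. A direct count shows this is consistent: $|X_2| = k - \alpha = (a + 2e) - \alpha = (a - \alpha) + 2e$.

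The main obstacle is to verify that $a \geq \alpha$, i.e., that there are enough singleton rooms to absorb all of $X_1$. This is precisely where the feasibility of $\inst$ enters: because only double rooms can accommodate two patients, $b \leq \nR_2$, and therefore $a = \nfp + \nmp - 2b \geq \nfp + \nmp - 2\nR_2$. Combined with the trivial bound $a \geq 0$, this yields $a \geq \max\{\nfp + \nmp - 2\nR_2, 0\} = \alpha$. Once this inequality is in hand, the three-stage construction saturates every one of the $2\nR$ vertices of $G_\inst$ exactly once, producing the desired perfect matching.
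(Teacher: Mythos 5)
Your construction is correct, and it takes a genuinely different route from the paper. The paper proves the lemma non-constructively via Tutte's $1$-factor theorem, handling the case $k=0$ separately and then checking, by a case analysis on which cuts $U$ can disconnect $G_\inst$, that $G_\inst-U$ never has more than $\abs{U}$ odd components. You instead start from a feasible solution $\S$ of the \ac{rmp} instance (which exists by hypothesis) and translate it edge by edge into a perfect matching: patient--patient edges for the $b$ double-occupancy rooms (legal in $E$ by sex separation), patient--auxiliary edges for the $a$ singletons, and $X_2$--$X_2$ edges for the $e$ empty rooms, with the counting $k=a+2e$, the bound $a\geq\max\{\nfp+\nmp-2\nR_2,\,0\}=\alpha$ from $b\leq\nR_2$, and $\abs{X_2}-(a-\alpha)=2e$ even, exactly closing the argument. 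Your key inequality $a\geq\alpha$ is precisely what is needed so that the vertices of $X_1$, which are adjacent only to patients, can all be absorbed by singleton patients. What your approach buys is that it is more elementary (no matching theory beyond the definition is invoked), it makes the correspondence between feasible assignments and perfect matchings explicit -- in fact it is essentially the same bookkeeping the paper later uses in the second lemma to build the matching $H'$ from a solution $\S'$ -- and it does not even use the assumption that a double room exists, so it proves a slightly stronger statement. The paper's Tutte argument, by contrast, is a purely graph-theoretic verification that does not require exhibiting a concrete assignment, but it is longer and needs the $\nR_2\geq 1$ hypothesis in its second case.
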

\begin{proof}
Let $k:=2\nR - \nfp - \nmp$. If $k=0$, then $G_\inst$ consists of the two complete subgraphs $G_\inst[\F]$ and $G_\inst[\M]$.
Since $\inst$ is feasible, we have $\rc_r=2$ f.a. $r\in\R$ and both $\nfp$ and $\nmp$ are even which implies the existence of a perfect matching.

We prove that $G_\inst$ also contains a perfect matching if $k\geq 1$ by using Tutte's $1$-factor theorem.
Therefore, let $U\subsetneq V$.
If $U=\emptyset$, then we have no odd components in $G_\inst-\emptyset=G_\inst$ since $G_\inst$ is connected with $\abs{V}=2\nR$.

So, let $\abs{U}\geq 1$.
By construction of $G_\inst$, there are only two possibilities to choose $U$ so that $G_\inst -U$ is no longer connected:
\begin{enumerate}
    \item $X\subseteq U$, then $G_\inst -U$ is a subgraph of $G_\inst[\F\cup\M]$ which consists of at most two connected components.
        Therefore, it remains to show here that $\abs{U}=1$ implies that at most one of $G_\inst[\F]$ or $G_\inst[\M]$ odd, i.e., either $\nfp$ or $\nmp$ is even.
Since the original \ac{rmp} instance is feasible, we have
        \[1=\abs{U}=\abs{X}=k=2\nR - \nfp - \nmp.\]
        Which implies directly that one of $\nfp$ and $\nmp$ is even and the other one is odd.
        Hence, $G_\inst -U$ has at most $\abs{U}$ odd components.
    \item $\F\cup\M\subseteq U$, then $G_\inst -U$ is a subgraph of $G_\inst[X]$ which consists of at most $\alpha+1$ connected components.
        If $\alpha=0$, then $G_\inst-U$ is connected as $G_\inst[X_2]$ is a complete graph.
        So, let $\alpha\geq 1$. Then $G_\inst-U$ has at most $\alpha+1$ connected components.
        Since $\nR_2\geq 1$, we have
        \[\alpha+1=\nfp+\nmp-2\nR_2 +1\leq\nfp+\nmp -1 < \nfp+\nmp \leq \abs{U}.\]
        Hence, $G_\inst -U$ has at most $\abs{U}$ odd components.
\end{enumerate}

Overall, $G_\inst-U$ contains at most $\abs{U}$ odd components for every $U\subseteq V$.
Therefore, $G_\inst$ contains a perfect matching according to Tutte's $1$-factor theorem.
\end{proof}

\begin{lemma}
Let $\inst$ be a feasible \ac{rmp} instance.
A minimum-weight perfect matching in $G_\inst$ corresponds to an optimal solution of $\inst$.
\end{lemma}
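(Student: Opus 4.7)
The plan is to establish a weight-preserving correspondence between perfect matchings in $G_\inst$ and feasible \ac{rmp} solutions. From this, the result follows immediately, since the minimum-weight perfect matching must then correspond to a solution of minimum $\pref$-value.

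First, I would show how a perfect matching $M$ in $G_\inst$ yields a feasible solution $\S$: every patient-patient edge becomes a pair of roommates sharing a double room, every patient-auxiliary edge becomes a single occupancy, and every auxiliary-auxiliary edge becomes an empty room. Sex separation (property iii) is automatic from the definition of $E$, which only admits patient-patient edges between vertices of the same sex. The only non-trivial feasibility requirement is that the number $p_2$ of patient-patient edges does not exceed $\nR_2$; here I would use that the vertices of $X_1$ are adjacent only to patient vertices (by construction), so all $\alpha$ of them must be matched to patients. Together with the counts
\[2p_2+p_1 = \nfp+\nmp \quad\text{and}\quad p_1 \geq \alpha = \max\{\nfp+\nmp-2\nR_2,0\},\]
this yields $p_2 \leq \nR_2$. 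Thus the $p_2$ pairs fit into the double rooms, and the remaining $p_1$ singletons can be placed into the $\nR_1$ single rooms and the leftover $\nR_2 - p_2$ doubles, using exactly $\nR$ rooms in total.

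Second, by the definition of the cost function $\cl$, each edge contributes exactly the $\pref$-value of its corresponding group (pair, singleton, or empty set), so $\cl(M) = \pref(\S)$.

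Third, I would verify the converse direction: given any feasible solution $\S$, I construct a perfect matching $M(\S)$ of equal weight. Patient roommates are matched by patient-patient edges, patients occupying a room alone are matched to auxiliary vertices (exhausting $X_1$ first, which is possible because a feasible assignment uses at least $\alpha$ singly-occupied rooms, since at most $\nR_2$ of the $\nfp+\nmp$ patients can be paired), and the remaining vertices of $X_2$ — an even number, representing empty rooms — are matched among themselves inside the complete subgraph $G_\inst[X_2]$. Again $\cl(M(\S)) = \pref(\S)$ edge by edge. The main conceptual obstacle lies in the forward direction: guaranteeing that the matching structure respects the distinction between single and double room capacities on a per-room basis, and this is precisely what the separation of $X$ into $X_1$ and $X_2$ achieves.
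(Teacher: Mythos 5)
Your proposal is correct and follows essentially the same route as the paper's proof: both directions of a weight-preserving translation between perfect matchings in $G_\inst$ and feasible solutions, with the capacity bound $p_2\leq \nR_2$ obtained from the fact that the $\alpha$ vertices of $X_1$ can only be matched to patient vertices, and the converse construction exhausting $X_1$ first and pairing up the leftover (even number of) $X_2$ vertices. The only difference is presentational: you argue the capacity bound by direct counting and phrase optimality via the correspondence, whereas the paper runs both the capacity check and the optimality step as proofs by contradiction.
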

\begin{proof}
Let $H\subseteq E$ be a minimum-weight perfect matching in $G_\inst$.
We define the collection $\S$ as follows,
\begin{align*}
\S:=&\{\{p,q\}\mid pq\in H,\ p,q\in\F\cup\M\}\\
\cup&\{\{p\}\mid px\in H,\ p\in\F\cup\M,\ x\in X\}\\
\cup&\{\emptyset \mid xy\in H, x,y\in X\}
\end{align*}
Condition \ref{pref:itm:exone} is satisfied as $H$ is a matching,
condition \ref{pref:itm:everypat} is satisfied as the matching $H$ is perfect, and
condition \ref{pref:itm:sex} is satisfied by construction of the graph $G_\inst$.
We show that condition \ref{pref:itm:cap} is also satisfied by proving that there are not more than $\nR_2$ elements with cardinality $2$ in $\S$.
Assume to the contrary that there are $\beta > \nR_2$ matching edges between patient vertices in $H$.
From $\beta > \nR_2$ it follows directly that there must be enough patients to feasibly fill $\beta$ double rooms, i.e.,
\[\floor*{\frac{\nfp}{2}}+\floor*{\frac{\nmp}{2}}\geq \beta > \nR_2.\]
Thus, we also have $\nfp+\nmp > 2\nR_2$ which implies $\alpha \geq 1$.
The total number of patient vertices that are incident to an edge in $H$ is therefore
\[2\beta + \alpha > 2\nR_2 + \nfp+\nmp-2\nR_2 = \nfp+\nmp.\]
This is a contradiction as we only have $\nfp+\nmp$ patient vertices in $G_\inst$.
Hence, condition \ref{pref:itm:cap} is satisfied and $\S$ represents a feasible solution for $\inst$.

It remains to show that $\S$ represents an optimal solution.
Assume to the contrary that there exists a solution $\S'$ for $\inst$ with $\pref(\S') < \pref(\S)$.
Let \[\S'_2:=\{S\in \S'\mid \abs{S}=2\}\]
be the set of all roommate pairs and
\[\S'_1:=\bigcup_{S\in \S': \abs{S}=1} S = \{y_1,\ldots,y_u\}\]
be the set of all patients who are alone in a room.
As $\S'$ is a feasible solution for $\inst$, we have $\abs{\S'_2}\leq \nR_2$ and $\abs{\S'_1}+\abs{\S'_2}\leq \nR_1+\nR_2$.
Then, it follows directly that $\alpha\leq \abs{\S'_1}$.
Moreover, it follows with $2\abs{\S'_2}+\abs{\S'_1}=\nfp+\nmp$ that
\begin{align*}
k-\abs{\S'_1} &= 2\nR_2+2\nR_1-\nfp-\nmp-\abs{\S'_1}\\
    &= 2\nR_2+2\nR_1-2\abs{\S'_2}-2\abs{\S'_1}\\
    &\geq 0
\end{align*}
and even.
Hence, we can define an edge set $H'\subseteq E$ that represents solution $\S'$ in $G_\inst$ as
\begin{align*}
H':=&\{pq\mid \{p,q\}\in \S'_2\}\\
\cup&\{y_i v_i \mid 1\leq i \leq u\}\\
\cup&\{v_{u+1}v_{u+2},v_{u+3}v_{u+4},\ldots,v_{k-1}v_k\}
\end{align*}
which is a perfect matching in $G_\inst$ by construction.
Since \[\cl(H')=\pref(\S')<\pref(\S)=\cl(H),\] this is a contradiction to $H$ being a minimum-weight perfect matching in $G_\inst$.
\end{proof}

With the graph $G_\inst$, we can thus solve the \ac{rmp} to optimality using known algorithms for the \acl{mwpm}.
\begin{theorem}
\label{thm:solveRMP}
\ac{rmp} with capacities $\rc_r\in\{1,2\}$ can be solved to optimality in $\O(n^3)$.
\end{theorem}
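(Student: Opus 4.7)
The plan is to assemble the two lemmas proved immediately above with a known algorithm for the \ac{mwpm} to obtain the runtime bound. First, I would verify feasibility of the given \ac{rmp} instance in polynomial time using the procedure referenced from~\cite{brandt2024privates}; if the instance is infeasible, we are done. Otherwise, I would construct the graph $G_\inst=(V,E,\cl)$ as defined in this section.

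The next step is to bound the size of $G_\inst$. The vertex set consists of $\nfp + \nmp$ patient vertices and $k = 2\nR - \nfp - \nmp$ auxiliary vertices, so $|V| = 2\nR = \O(n)$ where $n := \nfp + \nmp + \nR$ captures the input size. The edge set is contained in the complete graph on $V$, hence $|E| = \O(n^2)$. Each edge weight $\cl(uv)$ is obtained by one evaluation of $\pref$ on a set of cardinality at most two, which by assumption takes polynomial (indeed, constant in the number of patients) time.

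By the first lemma, $G_\inst$ admits a perfect matching, so the \ac{mwpm} on $G_\inst$ is well defined; by the second lemma, any minimum-weight perfect matching translates back to an optimal \ac{rmp} solution via the explicit correspondence given there. Applying Gabow's implementation of Edmonds' blossom algorithm computes a minimum-weight perfect matching on a graph with $|V| = \O(n)$ vertices in $\O(n^3)$ time, which dominates both the construction of $G_\inst$ and the feasibility check. Combined with the decoding step from the second lemma, which runs in time linear in $|H| \le |V|/2$, this yields the claimed $\O(n^3)$ bound.

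There is no real obstacle here beyond cleanly combining the pieces: the heavy lifting has already been done in the two preceding lemmas (existence of a perfect matching and the optimality-preserving correspondence), and the runtime is inherited directly from the classical \ac{mwpm} algorithm. The only point to be careful about is making explicit that the size parameter in the $\O(n^3)$ bound for \ac{mwpm} refers to the number of vertices of $G_\inst$, which is linear in the size of the \ac{rmp} instance, so that the stated complexity is expressed in the natural input parameters.
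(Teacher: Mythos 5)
Your proposal is correct and follows essentially the same route as the paper: construct $G_\inst$ (which the paper notes takes $\O(n^2)$ with $n=2\nR$), invoke the two preceding lemmas for correctness of the reduction, and apply a known $\O(n^3)$ algorithm for the \ac{mwpm} (the paper cites \citet{KorteVygen} rather than Gabow's implementation, but this is immaterial). Your extra care in spelling out the input-size parameter and the decoding step only makes explicit what the paper leaves implicit.
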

\begin{proof}
The graph $G_\inst$ can be constructed in $\O(n^2)$ with $n=2\nR$ and \citet{KorteVygen} report an algorithm for \acl{mwpm} with runtime $\O(n^3)$.
\end{proof}

Using \cref{thm:solveRMP}, we can compute the optimal roommate-fit score $\wmin_t$ for each time period $t\in\T$.
Using the exact computation of $\wmin_t$, we know that their sum over all time periods $t\in\T$ is a tight lower bound on the total objective value for $\fpref$, i.e.,
\begin{equation} \label{eq:wmin}
\fpref\geq \wmin :=\sum_{t \in \T} \wmin_t.
\end{equation}
This bound can always be achieved as long as arbitrary many transfers may be used.

We use these combinatorial insights in \cref{sec:pref:lp} to compute the optimal value of $\fpref$.
This allows us, on the one hand, to assess the quality of a found solution w.r.t. the roommate suitability, and
on the other hand, we can also integrate it directly into our dynamic solution approach.
However, in order to integrate roommate suitability into our solution approach for \ac{pra}, we need a calculable scoring function.
In the next section, \cref{sec:pref:patient-fit}, we first give an overview about criteria for roommate suitability that are reported in literature.
Second, we present the five scoring functions that we use to evaluate how the choice of scoring function influences the runtime and solution quality in \cref{sec:pref:lp}.

\section{Measuring the roommate fit}
\label{sec:pref:patient-fit}
Assessing how roommates influence a patient's satisfaction and health outcomes in hospital is an ongoing discussion in literature.
For surgical wards, already in 1983 Kulik et al. reported  that sharing a room with a post-operative patient statistically decreases a patient's anxiety of their own surgery~\cite{Kulik_1993}.
Later, they also found that having a roommate statistically decreases a patient's post-operative recovery time~\cite{Kulik_2000}.

Another positive influence is reported by recent studies on oncology wards.
There, fellow roommates with similar diagnosis, severity and age can be a great source of information and hope~\cite{Isaksen_2000}.
However, it is also reported that the loss of privacy can lead to patients withholding relevant information or preventing them from asking questions~\cite{Andersen_2014}.
Positive effects are foremost reported if patients can relate to each other well.

Being able to relate and communicate not only increases patients' well-being but can also measurably reduce patients' length-of-stay.
Schäfer et al. report that the age difference is generally a good indicator for predicting how well patients get along~\cite{Schaefer2019}.
However, \citet{Hantel_2019} report that roommates' personality and behaviour is more significant.
\citet{Sehgal_2023} study the impact of numerous social attributes on the length-of-stay to determine which of them can be used as indicator for (statistically) good roommate pairs.

Looking at the criteria mentioned by \citet{Sehgal_2023}, they can be described by scoring functions of one of the following three types.
\begin{enumerate}[I)]
    \item Arbitrary patient-fit scores $\pref(S)\in\mathbb{Q}$ or $\N_0$, or rather, since the patient set is finite, $\pref(S)\in\{0,1,\ldots,M\}$ for an $M\in\N$.
    An example for such arbitrary scores is the age difference of patients. \label{itm:general-score-def}
    \item Scores that indicate a good fit versus indifference $\pref(S)\in\{0,1\}$, e.g., indicating whether patients speak a common language.\label{itm:0-1-pref-matr}
    \item Scores that indicate a good fit versus indifference $\pref(S)\in\{0,1\}$ but also have a special structure, e.g., the patients can be divided in different categories, e.g., by age class, and  have a good fit, i.e., $\pref(S)=0$ iff all patients in $S$ are in the same category and $\pref(S)=1$ otherwise.\label{itm:class-score-def}
\end{enumerate}

Although the reports in literature are ambiguous how well age difference is suited to predict a good patient fit, we use it in our computational study since it is the only criterion that is included in our data.
We present four different functions to model age difference, at least one of type \ref{itm:general-score-def} to \ref{itm:class-score-def} each, and compare their impact on the runtime.
Additionally, we propose a scoring function that models the positive influence of mixing pre- and postoperative patients by assuming that a surgery takes place on the second day of each patient's stay.
Therefore, let $\age_p$ denote the age of patient $p\in\P$.

\begin{description}
    \item[Absolut age difference] We define $\pref(S)$ as the maximal age difference of patients in $S$, i.e.,
        \[
            \pref(S):=\max_{p\in S} \{\age_p\} - \min_{p\in S} \{\age_p\}.
        \]
        For $\abs{S}=2$, the value $\pref(S)$ corresponds to the absolut age difference of the two patients and, for $\abs{S}=1$, the value $\pref(S)=0$.
        This defines a patient-fit score of Type \ref{itm:general-score-def}.
        
    \item[Bounded age difference] We request that the age difference between two patients in $S$ is less or equal to $k\in\N$, i.e.,
        \[
            \pref(S):=\begin{cases}
                0, &\text{if } \left(\max_{p\in S} \{\age_p\}\right) - \left(\min_{p\in S} \{\age_p\}\right)\leq k\\
                1, &\text{otherwise.}
        \end{cases}\]
        This defines a patient-fit score of Type \ref{itm:0-1-pref-matr}. 
    \item[Age classes] We divide the patients into different age classes of size $k\in\N$ and either count the number of age classes represented in $S$, i.e.,
        \[
            \pref(S):=\abs{\left\{\ceil*{\frac{\age_p}{k}} \mid p\in S\right\}}
        \]
        or we check whether all patients in $S$ are in the same age class, i.e.,
        \[
            \pref(S):=\begin{cases}
                0, &\text{if } \abs{\left\{\ceil*{\frac{\age_p}{k}}\mid p\in S\right\}}=1,\\
                1, &\text{otherwise.}
        \end{cases}\]
    	This defines a patient-fit score of Type \ref{itm:general-score-def} or \ref{itm:0-1-pref-matr}, respectively.
	\item[Weighted age difference] We measure the relative age difference of two patients, with an offset $\epsilon>0$ to ensure a finite value, i.e.,
		\[
			\pref(S)_\epsilon \coloneqq \frac{\max_{p\in S}\{\age_p\}+\epsilon}{\min_{p\in S}\{\age_p\}+\epsilon}.
		\]
		Notably, $\pref(S)_\epsilon-1$ directly corresponds to the numeric percentage of deviation. This defines a patient-fit score of Type \ref{itm:general-score-def}.
		
\end{description}

Of course, there are various further possibilities to assess the similarity of roommates' age.
Especially for larger room sizes, the following two models can be interesting:
asking that every patient has at least one roommate of similar age, i.e.,
        \[
            \pref(S):=\begin{cases}
                0, &\text{if } \forall p\in S\exists q\in S\setminus\{p\}:\, \abs{\{\age_p\} - \{\age_p\}}\leq k \text{ (or } \ceil*{\frac{\age_p}{k}}=\ceil*{\frac{\age_p}{k}}\text{)},\\
                1, &\text{otherwise,}
        \end{cases}\]
        or
asking that the age classes represented in $S$ are balanced.
        Therefore, let $k\in\N$ be the age classes' size and $l:=\abs{\{ \ceil*{\frac{\age_p}{k}}\mid p\in S\}}$ the number of age classes in $S$.
        Further, let $S_1,\ldots,S_l\subseteq S$ be a partition of $S$ based on the patients' age classes, i.e.,
        $\bigcup_{i=1}^{l} S_i = S$ and $S_i\cap S_j=\emptyset$ for all $1\leq i < j\leq l$, with
        \[\bigabs{\left\{\ceil*{\frac{\age_p}{k}}\mid p\in S_i\right\}}=1\]
        for all $1\leq i \leq l$.
        Then, we define 
        \[
            \pref(S):=\begin{cases}
                0, &\text{if } \abs{\abs{S_i}-\abs{S_j}}\leq x\quad \forall 1\leq i<j\leq l,\\
                1, &\text{otherwise,}
        \end{cases}\]
        for a constant $x\in\N_0$. Both functions define a patient-fit score of type \ref{itm:0-1-pref-matr}.
However, all our instances contain rooms with capacities less or equal to $2$.
Therefore, we do not include them in our computational study.

For modelling the pairing of pre- and post-surgery patients
we simply assume that every patient has surgery on their first or second day in hospital
since we lack data on the patient's surgery date:
\begin{description}
    \item[Pre/post surgery] We request that there is at least one time period between the admission dates of the earliest and latest arriving patients in $S$, i.e.,
        \[
            \pref(S):=\begin{cases}
                0, &\text{if } \left(\max_{p\in S} \{\arr_p\}\right) - \left(\min_{p\in S} \{\arr_p\}\right)> 1\\
                1, &\text{otherwise.}
        \end{cases}\]
        This defines a patient-fit score of Type \ref{itm:0-1-pref-matr}. 
\end{description}

The presented concepts can easily be transferred to
other criteria for a good roommate fit, e.g., common television preferences.

\section{Finding fitting roommates using linear programs}
\label{sec:pref:lp}
In~\cite{brandt2024privates}, we proposed and compared several \ac{ip} formulations for \ac{pra} with the objectives of minimizing transfers $(\ftrans)$ and maximizing single-room requests $(\fpriv)$ in terms of their computational performance.
In this section, we use the best performing \acp{ip} from \cite{brandt2024privates} and integrate roommate compatibility into them.
For more information on the objectives $\ftrans$ and $\fpriv$, we refer to \cite{brandt2024privates}.

Since all our instances contain only rooms with at most two beds, we focus on this case only and use binary variables $y$ to indicate whether two patients share a room.
We present again both general \acp{ip} that allow arbitrary transfers in \cref{subsec:pref:ip:general}, and \acp{ip} which prohibit transfers by construction in \cref{sec:pref:ip:notransfer}.

\subsection{General integer programming formulation}
\label{subsec:pref:ip:general}
In this section, we integrate the maximization of single-room requests into the best performing \ac{ip} (H) from \cite{brandt2024privates}.
To encode whether two patients $p,q\in\P$ share a room in time period $t\in\T$,
we use three-dimensional binary variables $y_{pqt}$:
\begin{equation}
	y_{pqt} =\begin{cases}
		1,  \text{if patients } p \text{ and } q \text{ share a room} \text{ in time period } t,\\
		0,  \text{otherwise,}
	\end{cases}\\
\label{var:ypqt}
\end{equation}
Thus, the total score of patients who are assigned to the same rooms is given by
\begin{equation}
\label{eq:ypqt:prefobj}
    \fpref:=\sum_{t\in\T}\sum_{\substack{p,q \in \P(t) \\ p\neq q}}w_{pq}y_{pqt}.
\end{equation}
Additionally to variables $y_{pqt}$, we use the following variables:
\begin{align}
x_{prt}&=\begin{cases}
            1,  &\text{if patient }  p \text{ is assigned to room } r \text{ in time period } t,\\
            0,  &\text{otherwise,}
        \end{cases}\label{var:xprt}\\
\delta_{prt}&=\begin{cases}
            1,  &\text{if patient } p \text{ is transferred from room } r \text{ to another room}\\ &\text{after time period } t\\
            0,  &\text{otherwise,}
        \end{cases}\label{var:deltaprt}\\
    g_{rt} &=\begin{cases}
            1,  &\text{if there is a female patient assigned to room } r \text{ in time}\\ &\text{period } t,\\
            0,  &\text{otherwise.}
        \end{cases}\label{var:grt}\\
s_{prt} &=\begin{cases}
        1,  &\text{if } p \text{ is alone in room } r \text{ in time period } t,\\
        0,  &\text{otherwise.}
    \end{cases}\label{var:sprt}
\end{align}

As in \ac{ip} (H), we ensure that all patients are assigned to rooms for every time period of their stay using
\begin{equation}
\label{eq:everypatientAroom}
\sum_{r\in\R} x_{prt} = 1   \quad \forall t\in\T, p\in\P(t),
\end{equation}
and use the combined capacity, sex-separation, and single-room constraints
\begin{align}
    \sum_{p\in\fp(t)} x_{prt} + \sum_{p\in\fp\cap\priv(t)} (\rc_r -1)s_{prt} &\leq \rc_r g_{rt} &&\forall t\in\T,\ r\in\R,\label{eq:single:grt}\\
    \sum_{p\in\mp(t)} x_{prt} + \sum_{p\in\mp\cap\priv(t)} (\rc_r -1)s_{prt} &\leq \rc_r (1-g_{rt}) &&\forall t\in\T,\ r\in\R,\label{eq:single:grt:only}
\end{align}
with
\begin{equation}
    s_{prt} \leq x_{prt}   \quad\forall t\in\T,\ p\in\priv(t),\ r\in\R.\label{eq:single:s}
\end{equation}
We then model the choice of roommates via
\begin{equation}
    y_{pqt}\geq x_{prt}+x_{qrt}-1 \quad\forall t\in\T,\ p,q\in\P(t),\ p\neq q. \label{eq:pref}
\end{equation}

\citet{brandt2024privates} showed that it makes sense to fix $\fpriv$ to its optimum, if $\fpriv$ has the highest priority, instead of modelling it directly as objective function using
\begin{equation}
    \sum_{p\in\priv(t)}\sum_{r\in\R} s_{prt} \geq \smax_t \quad \forall t\in\T. \label{eq:fix:smax_t}
\end{equation}
Analogously, we can fix $\fpref$ to its minimum, alternatively to minimizing $\fpref$,
using
\begin{equation}
    \sum_{\substack{p,q \in \P(t) \\ p\neq q}}w_{pq}y_{pqt} \leq \wmin_t \quad \forall t\in\T. \label{eq:fix_wmin}
\end{equation}

In this section, we showcase how the objective setting and choice of scoring function influences the runtime.
We use the insights gained in our previous computational studies to reduce the number of setups to evaluate and compare the following three \ac{ip} formulations:
\begin{enumerate}[(A)]
\setcounter{enumi}{16}
    \item $\max~(-\ftrans,\fpriv,-\fpref)$ s.t. \cref{eq:everypatientAroom,eq:single:grt,eq:single:grt:only,eq:single:s,eq:pref}, \label{IP:Q}
    \item $\min~(\fpref,\ftrans)$ s.t. constraints of \ac{ip} \ref{IP:Q}, \cref{eq:fix:smax_t}, \label{IP:R}
    \item $\min~\ftrans$ s.t. constraints of \ac{ip} \ref{IP:R}, \cref{eq:fix_wmin}. \label{IP:S}
\end{enumerate}
The objectives' order determines again their priority in optimisation, i.e., $\min (\fpref,\ftrans)$ means that first $\fpref$ is minimised and then $\ftrans$.
\ac{ip} \ref{IP:Q} is the direct extension of \ac{ip} (H) where we
give $\fpriv$ a higher priority than $\fpref$ as this models the reality on most wards according to personal communication with practitioners.
With \ac{ip} \ref{IP:R}, we compute the optimal value for the patient compatibility under the condition that the maximum number of single-room requests is fulfilled while minimizing the number of needed transfers as second objective.
The resulting values are interesting to assess the sensibility of an integration of $\wmin$ into the \ac{ip} formulation, which we then evaluate using \ac{ip} \ref{IP:S}.
Additionally, we derive from a comparison of \acp{ip} \ref{IP:R} and \ref{IP:Q} whether it is computationally easier to optimize $\ftrans$ or $\fpref$.
However, our code can easily be adapted to evaluate other objective settings \cite{unserCode-v3}.

To perform the computational study, we further have to choose a tangible scoring function.
First computational results showed that the bounded-age-difference scoring function performs best among all scoring functions proposed in \cref{sec:pref:patient-fit} while the weighted-age-difference scoring function performs worst.
Therefore, we use those two scoring functions to evaluate the performance of \acp{ip} \ref{IP:Q}, \ref{IP:R}, and \ref{IP:S}.
Again, our code can easily be adapted to evaluate other scoring functions \cite{unserCode-v3}.

\begin{figure}[bt!]
    \centering
    \includegraphics[width=0.7\textwidth]{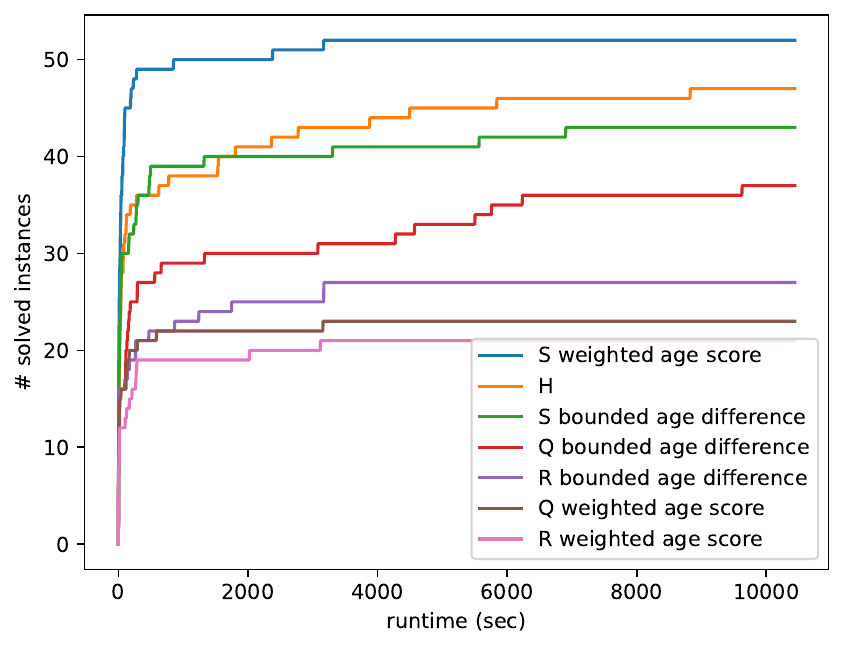}
    \caption{Comparison of IPs \ref{IP:Q} - \ref{IP:S} using 62 real-life instances, maximum runtime 4h}
    \label{fig:Q-S:runtime:14040}
\end{figure}
The formulations for \acp{ip} \ref{IP:Q} to \ref{IP:S}
were implemented in \emph{python} 3.10.4 and solved using \emph{Gurobi 11.0.0} \cite{gurobi} with a time limit of 4 hours.
All computations were done on the \href{https://www.itc.rwth-aachen.de/cms/IT-Center/Services/Servicekatalog/Hochleistungsrechnen/~hisv/RWTH-Compute-Cluster/?lidx=1}{RWTH High Performance Computing Cluster} using CLAIX-2018-MPI with Intel Xeon Platinum 8160 Processors “SkyLake” (2.1 GHz, 16 CPUs per task, 3.9 GB per CPU).
The code can be found at \cite{unserCode-v3}.
We illustrate in \cref{fig:Q-S:runtime:14040} on the y-axis the number of instances that have been solved to optimality or proven to be infeasible within the time frame depicted on the x-axis.
To assess the runtime-costs for integrating patient compatibility, we also include again the performance of \ac{ip} (H).

First, we notice that \ac{ip} \ref{IP:S}, depending on the scoring function, performs even better than \ac{ip} (H) or only slightly worse.
However, we have to keep in mind that \ac{ip} \ref{IP:S} is proven to be infeasible for $32$ ($19$) instances using the weighted-age-difference (bounded-age-difference) scoring function.
If we consider only instances for which \ac{ip} \ref{IP:S} is feasible, \ac{ip} (H) performs better than \ac{ip} \ref{IP:S}.
Further, we notice that \ac{ip} \ref{IP:Q} outperforms \ac{ip} \ref{IP:R} which indicates that optimizing $\fpref$ is computationally harder than optimizing $\ftrans$.
Moreover, if we consider again only instances where \ac{ip} \ref{IP:S} is feasible, \ac{ip} \ref{IP:Q} performs also better than \ac{ip} \ref{IP:S}.
Hence, the runtime advantage of \ac{ip} \ref{IP:S} lies in proving infeasibility, i.e., that the $\wmin$ cannot be achieved if single-room requests are fixed to maximum.
Overall, our results show that the choice of scoring function influences the \ac{ip}'s runtime more than the choice of \acp{ip}.

Regarding the achieved objective values, we observe that no transfers are needed in all optimal solutions found for \acp{ip} \ref{IP:Q} and \ref{IP:S} and in all but one optimal solutions for \ac{ip} \ref{IP:R}.
This result is impressive as all but three optimal solutions found for \ac{ip} \ref{IP:R} achieve $\fpref=\wmin$ and all but six optimal solutions found for \ac{ip} \ref{IP:Q}.
Therefore, we evaluate \ac{ip} formulations that prohibit transfers by construction in the next section.

\subsection{Integer programming formulation without transfers}
\label{sec:pref:ip:notransfer}
Analogously to \cite{brandt2024privates}, we propose and evaluate \ac{ip} formulations where transfers are prohibited by construction.
Therefore, we integrate in this section the choice of roommates into the best performing \ac{ip} (P) from \cite{brandt2024privates}.

Since our \ac{ip} forbids arbitrary transfers, we can restrict ourselves to two-dimensional binary variables $y_{pq}$ that encode whether two patients $p,q\in\P$ share a room at some time during their stay:
\begin{equation}
	y_{pq} =\begin{cases}
		1,  \text{if patients } p \text{ and } q \text{ share a room},\\
		0,  \text{otherwise.}
	\end{cases}\\
\label{var:ypq}
\end{equation}
To model the total roommate score correctly,
we calculate the length of the common stay of patients $p$ and $q$ as
\begin{equation}
    \los(p,q):= \min\{\dis_p,\dis_q\} - \max\{\arr_p,\arr_q\}\label{eq:los:pq}
\end{equation}
and the set of potential roommates $\potrmm$ as the tuples of patients with the same sex and intersecting hospital stays, i.e.,
\begin{equation}
    \potrmm:=\left\{(p,q) \mid (p,q\in \fp \text{ or } p,q\in \mp) \text{ and } \los(p,q) > 0\right\}.
\end{equation}
Then, the total roommate score is given by
\begin{equation}
\label{eq:ypq:prefobj}
    \fpref:=\sum_{\substack{(p,q) \in \potrmm}} \los(p,q) w_{pq} y_{pq}.
\end{equation}

Additionally to variables $y_{pq}$, we use the following binary varaibles $x_{pr}$, $g_{rt}$, and $s_{prt}$ for $p\in\P$, $r\in\R$, and $t\in\T$:
\begin{align}
x_{pr}&=\begin{cases}
            1,  &\text{if patient }  p \text{ is assigned to room } r,\\
            0,  &\text{otherwise,}
        \end{cases}\label{var:xpr}\\
    g_{rt} &=\begin{cases}
            1,  &\text{if there is a female patient assigned to room } r \text{ in time}\\ &\text{period } t,\\
            0,  &\text{otherwise.}
        \end{cases}\tag{\ref{var:grt}}\\
s_{prt} &=\begin{cases}
        1,  &\text{if } p \text{ is alone in room } r \text{ in time period } t,\\
        0,  &\text{otherwise.}
    \end{cases}\tag{\ref{var:sprt}}
\end{align}

As in \ac{ip} (P), we first ensure that all patients are assigned to rooms for every time period of their stay
\begin{equation}
\label{eq:xpr:everypatientAroom}
    \sum_{r\in\R} x_{pr} = 1   \quad \forall p\in\P,
\end{equation}
fix the previous patient-room assignments via
\begin{equation}\label{eq:xpr:prefix}
    x_{pr} = 1 \quad \forall (p,r)\in\rpold,
\end{equation}
and use the combined capacity, sex-separation, and single-room constraints
\begin{align}
    \sum_{p\in\fp(t)} x_{pr} + \sum_{p\in\fp\cap\priv(t)} (\rc_r -1)s_{prt} &\leq \rc_r g_{rt} &&\forall t\in\T,\ r\in\R, \label{eq:xpr:single:grt}\\
    \sum_{p\in\mp(t)} x_{pr} + \sum_{p\in\mp\cap\priv(t)} (\rc_r -1)s_{prt} &\leq \rc_r (1-g_{rt}) &&\forall t\in\T,\ r\in\R, \label{eq:xpr:single:grt:only}
\end{align}
with
\begin{equation}
    s_{prt} \leq x_{pr}   \quad\forall t\in\T,\ p\in\priv(t),\ r\in\R.\label{eq:xpr:single:s}
\end{equation}
We also again fix $\fpriv$ to its optimal value via
\begin{equation}
    \sum_{p\in\priv(t)}\sum_{r\in\R} s_{prt} \geq \smax_t \quad \forall t\in\T, \tag{\ref{eq:fix:smax_t}}
\end{equation}
and then model the choice of roommates via
\begin{equation}
    y_{pq}\geq x_{pr}+x_{qr}-1 \quad\forall t\in\T,\ p,q\in\P(t),\ p\neq q. \label{eq:pr:pref}
\end{equation}

Analogously to \cref{sec:pref:ip:notransfer}, we can fix $\fpref$ to its minimum, alternatively to minimizing $\fpref$,
using
\begin{equation}
    \sum_{\substack{p,q \in \fp(t) \\ \los(p,q)>0}}w_{pq}y_{pq} + \sum_{\substack{p,q \in \mp(t) \\ \los(p,q)>0}}w_{pq}y_{pq} \leq \wmin_t \quad \forall t\in\T. \label{eq:fix_wmin:pr}
\end{equation}

Analogously to \cref{subsec:pref:ip:general}, we compare the following \ac{ip}-formulations
\begin{enumerate}[(A)]
\setcounter{enumi}{19}
    \item $\max~(\fpriv,-\fpref)$ s.t. \cref{eq:xpr:everypatientAroom,eq:xpr:prefix,eq:xpr:single:grt,eq:xpr:single:grt:only,eq:xpr:single:s,eq:pr:pref}, \label{IP:T}
    \item $\min~\fpref$ s.t. constraints of \ac{ip} \ref{IP:T}, \cref{eq:fix:smax_t}, \label{IP:U}
    \item $\min~0$ s.t. constraints of \ac{ip} \ref{IP:U}, \cref{eq:fix_wmin:pr},  \label{IP:Va}
\end{enumerate}
and use the bounded-age-difference, and  the weighted-age-difference scoring functions to evaluate their performance.
However, our code can easily be adapted to evaluate other objective settings and scoring functions \cite{unserCode-v3}.

\begin{figure}[bt!]
    \centering
    \includegraphics[width=0.7\textwidth]{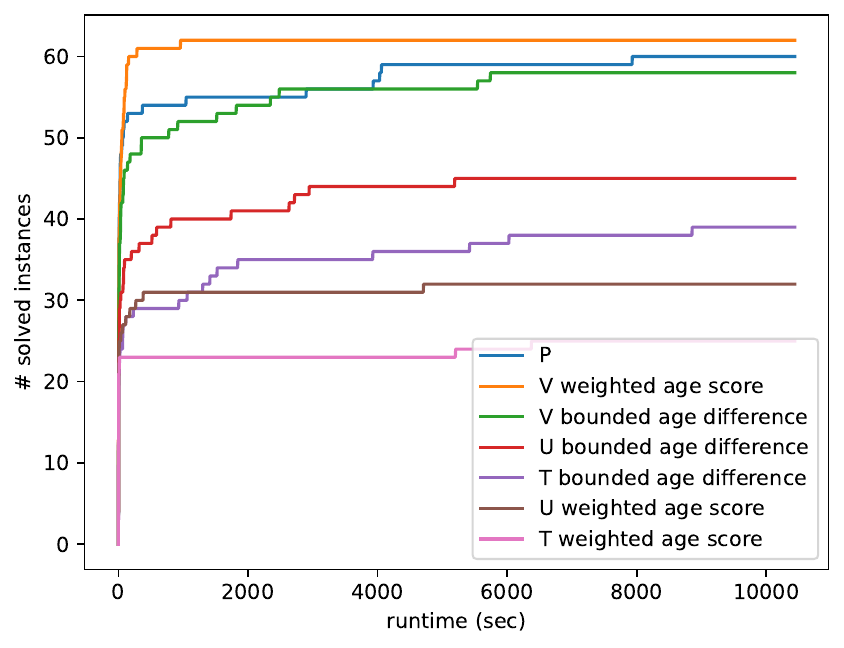}
    \caption{Comparison of IPs \ref{IP:T} - \ref{IP:Va} using 62 real-life instances, maximum runtime 4h}
    \label{fig:T-V:runtime:14040}
\end{figure}
We observe that \ac{ip} \ref{IP:Va} performs comparable to \ac{ip} (P).
However, we have to keep in mind that \ac{ip} \ref{IP:Va} is proven to be infeasible for $31$ ($42$) instances for the bounded-age-difference (weighted-age-difference) scoring function while for \ac{ip} (P) only $17$ instances are infeasible.
If we consider only instances that are feasible for \ac{ip} \ref{IP:Va}, we observe that \ac{ip} (P) performs best, followed by \ref{IP:Va} which performs only slightly better than \ref{IP:U}.
Moreover, our results show again the siginificant influence of the chosen scoring function on the runtime of all proposed \ac{ip} formulations.
We will examine this influence further in the next section, where we integrate the concept of roommate compatibility into our dynamic solution approach and evaluate all five scoring functions that we proposed in \cref{sec:pref:patient-fit}.

\section{Integrating the roommate fit into dynamic PRA}
\label{sec:pref:dyn}
In this section, we use the developed \ac{ip} formulations from \cref{subsec:pref:ip:general,sec:pref:ip:notransfer} to integrate roommate compatibility into solution approach
that we proposed in \cite{brandt2024privates}
for the \acf{dpra}.
In \ac{dpra}, the patient stays become known iteratively at their registration date.
The task is then to find in every time period a patient-to-room assignment for all registered patients that respects the previous room assignment of already arrived patients.
We denote the set of those pre-assigned room assignments with \[\F\subset \{p\in \P \mid \arr_p=0\}\times \R.\]

Naturally, we would use \ac{ip} \ref{IP:Va} as first \ac{ip} in our solution approach as it performed best on the large instances in the previous section.
However, we have to weigh the advantage regarding the optimization time of \ac{ip} \ref{IP:Va} against the multiple computation of $\wmin_t$ which has to be done in each iteration for every remaining time period of the planning horizon.
Therefore, it might be more efficient to use \ac{ip} \ref{IP:U} instead of \ref{IP:Va} in the initial step of our algorithm, although \ac{ip} \ref{IP:Va} performed better on the large instances in the previous section.
Furthermore, we compare the impact of all five scoring functions that we proposed in \cref{sec:pref:patient-fit} on the algorithm's runtime.

In general, the idea of our approach remains the same as in \cite{brandt2024privates}, since we still optimize primarily for $\fpriv$ while keeping the runtime acceptable for practical purposes (based on conversations with practitioners):
First, we check combinatorially whether the instance is feasible.
Second, we check whether we can find a solution that is optimal for $\fpriv$, $\ftrans$, and $\fpref$ either using \ac{ip} \ref{IP:Va} or \ac{ip} \ref{IP:U}.
If not, we third ask for an optimal solution w.r.t. $\fpriv$ and maximize $\fpref$ while transferring only currently present patients (\emph{same-day transfers}). 
If this is still not possible, we relax in the fourth step the fixation of $\fpriv$ to $\smax$ and maximize first the number of fulfilled single-room requests while only using same-day transfers. 
If we still find no feasible solution, we finally allow arbitrary transfers and use \ac{ip} \ref{IP:Q} to compute a feasible solution.
After successful computation, we fix all patient-room assignments for patients that are in hospital in the current time period by adding them to set $\rpold$ while removing outdated ones.
We then update the patient set and continue analogously with the next time period.
\begin{figure}[tb!]
    \centering
    \scalebox{0.5}{
        \begin{tikzpicture}[node distance=2cm,every label/.style={align=left}]
    
    \tikzstyle{startstop} = [rectangle, rounded corners, minimum width=2.5cm, minimum height=1cm,text centered, draw=black, fill=rwth-red!50]

    \tikzstyle{process} = [rectangle, minimum width=1.5cm, minimum height=1cm, text centered, draw=black, fill=combi-orange]
    
    \tikzstyle{decision} = [diamond, minimum width=2cm, minimum height=2cm, text centered, draw=black, fill=combi-green]
    
    \tikzstyle{YN} = [minimum width=0cm, minimum height=0cm, text centered]

    \tikzstyle{arrow} = [thick,->,>=stealth]
        
    
    \node (feasibility) [decision] {Feasible?};
    \node (initialisation) [startstop, below of=feasibility] {Initialisation};
    \node (no0) [YN,below of=feasibility, yshift=1.6cm,xshift=1.4cm] {No};
    \node (yes0d) [YN,below of=feasibility, yshift=1cm,xshift=0.6cm] {Yes};
     \draw [arrow] (feasibility) -- (initialisation);
    
    \node (N) [process, below of=initialisation] {\Large V/U}; 
    \node (N_eval) [decision, right of=N, xshift=1cm] {Feasible?};
    \node (Inc) [decision, below of=N_eval,yshift=-1cm] {\large$t=\nT?$};
    
    \draw [arrow] (initialisation) -- (N);
    \draw [arrow] (N) -- (N_eval);
    \draw [arrow] (N_eval) -- (Inc);
    
    \node (no1) [YN,below of=N_eval, yshift=1.6cm,xshift=1.4cm] {No};
    \node (yes1) [YN,below of=N_eval, yshift=0.5cm,xshift=0.4cm] {Yes};
    
    \node (no3) [YN,below of=Inc, yshift=1.6cm,xshift=-1.4cm] {No};
    \node (yes3) [YN,below of=Inc, yshift=0.5cm,xshift=0.4cm] {Yes};
    
    \node (C) [process, right of=N_eval, xshift=1cm] {\Large U*}; 
    \node (C_eval) [decision, right of=C, xshift=1cm] {Feasible?};
    
    \draw [arrow] (N_eval) -- (C);
    \draw [arrow] (C) -- (C_eval);
    \draw [arrow] (C_eval) |- (Inc);
    
    \node (no2) [YN,below of=C_eval, yshift=1.6cm,xshift=1.4cm] {No};
    \node (yes2) [YN,below of=C_eval, yshift=0.5cm,xshift=0.4cm] {Yes};
    
    \node (O) [process, right of=C_eval, xshift=1cm] {\Large T*}; 
    \node (O_eval) [decision, right of=O, xshift=1cm] {Feasible?};
    
    \draw [arrow] (C_eval) -- (O);
    \draw [arrow] (O) -- (O_eval);
    \draw [arrow] (O_eval) |- (Inc);
    
    \node (noO2) [YN,below of=O_eval, yshift=1.6cm,xshift=1.4cm] {No};
    \node (yesO2) [YN,below of=O_eval, yshift=0.5cm,xshift=0.4cm] {Yes};
    
    \node (E) [process, right of=O_eval, xshift=1cm] {\Large Q }; 
    \node (E_eval) [decision, right of=E, xshift=1cm] {Feasible?};
    \node (INF) [startstop, right of=E_eval, yshift=-4.8cm] {Terminate};
    
    \node (no4) [YN,below of=E_eval, yshift=1.6cm,xshift=1.4cm] {No};
    \node (yes4) [YN,below of=E_eval, yshift=0.5cm,xshift=0.4cm] {Yes};
    
    \draw [arrow] (O_eval) -- (E);
    \draw [arrow] (E) -- (E_eval);
    \draw [arrow] (E_eval) |- (Inc);
    \draw [arrow] (E_eval) -| (INF);
    \draw [arrow] (Inc) |- (INF);
    \draw [arrow] (feasibility) -| (INF);
    
    \draw [arrow] (Inc) -| (N);

    \node (IncLabel1) [YN,below of=Inc, yshift=1.6cm,xshift=-3.2cm] {Update $\P,\rpold$};
    \node (IncLabel2) [YN,below of=Inc, yshift=1.2cm,xshift=-3.2cm] {$t \pluseq 1$};

\end{tikzpicture}
    }
    \caption{Algorithm for dynamic PRA optimization of single-room requests, patient compatibility and transfers}
    \label{fig:modelcombi:dyn:pref}
\end{figure}
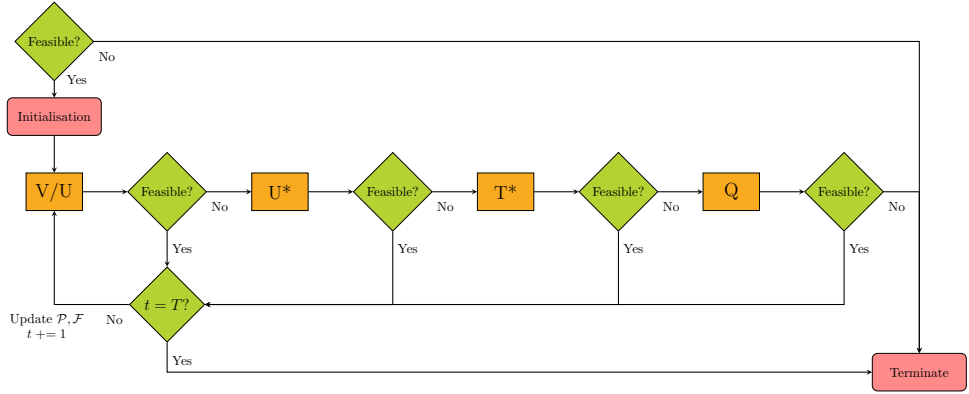

To formalize this solution apporoach, we need variants of \acp{ip} \ref{IP:T} and \ref{IP:U} that allow same-day transfers:
\begin{enumerate}[(A*)]
\setcounter{enumi}{19}
    \item $\max~(\fpriv,-\fpref,\sum_{(r,p)\in\rpold} x_{pr}$) s.t. constraints of \ref{IP:T} without \cref{eq:xpr:prefix}\label{IP:Tstar}
    \item $\max~(-\fpref,\sum_{(r,p)\in\rpold} x_{pr}$) s.t. constraints of \ref{IP:U} without \cref{eq:xpr:prefix}\label{IP:Ustar}
\end{enumerate}
A visualization of our solution approach is provided in \cref{fig:modelcombi:dyn:pref}.

We evaluate our algorithm again on our $62$ real-world instances spanning a whole year.
As a result, we get that all instances use $365$ iterations of the algorithm and all are solved within less than $600$ seconds per year, cf.~\cref{fig:dyn:runtime:pref}.
\begin{figure}[bt!]
\begin{subfigure}[b]{0.5\textwidth}
    \centering
    \includegraphics[width=\textwidth]{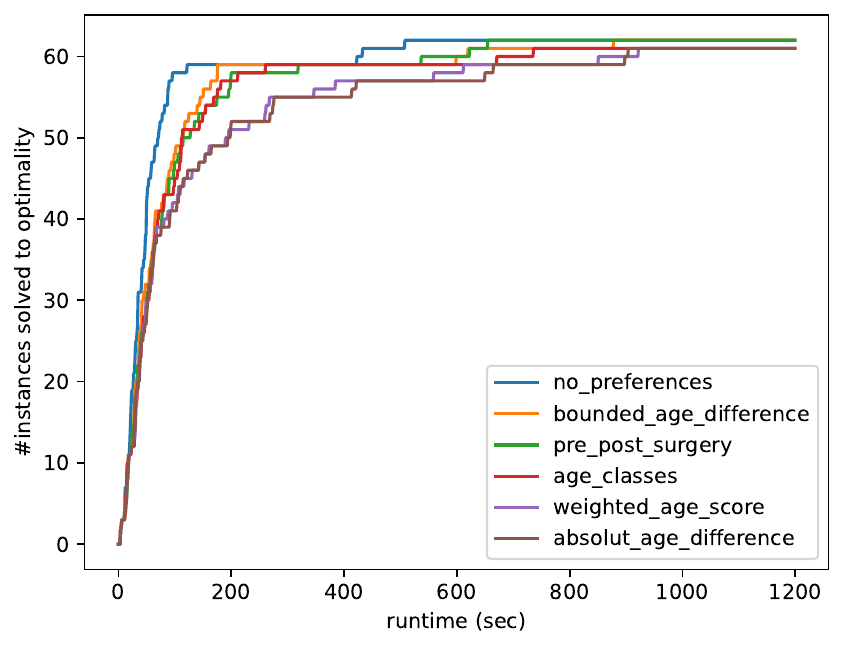}
    \caption{\ac{ip} \ref{IP:U} as first \ac{ip}}
    \label{fig:dyn:runtime:pref:U}
\end{subfigure}
\begin{subfigure}[b]{0.5\textwidth}
    \centering
    \includegraphics[width=\textwidth]{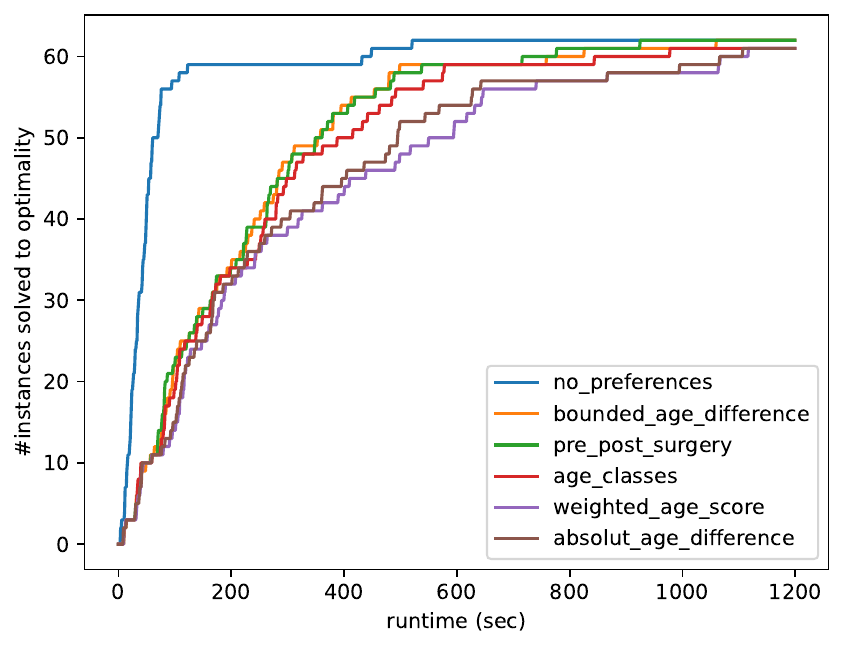}
    \caption{\ac{ip} \ref{IP:Va} as first \ac{ip}}
    \label{fig:dyn:runtime:pref:V}
\end{subfigure}
    \caption{Runtime of algorithm for dynamic PRA with $\nT=365$}
    \label{fig:dyn:runtime:pref}
\end{figure}
For application purposes however, the runtime per iteration is more interesting than the total runtime of $365$ iterations.
Therefore, we report in \cref{fig:dyn:results:perIter:pref} the runtime of all $2\cdot 6\cdot 62\cdot 365=271560$ iterations on a log-scale axis using boxplots.
We aggregate the results for the different scoring functions into one boxplot as separate (log-scaled) boxplots are identical to the eye.
Hence, the observed different runtimes are the result of statistical outliers, i.e., a minority of iterations that take significantly longer to solve.

The results show that more than $95$\% of the iterations are solved well in less than a second (for any scoring function) if we use \ac{ip} \ref{IP:U} in the first step,  cf.~\cref{fig:dyn:runtime:box:pref}.
We further observe that, in this case, the use of a scoring function increases the range of runtimes while decreasing the median runtime in comparison to ignoring patient compatibility.
If we use \ac{ip} \ref{IP:Va} in the first step, we observe again that the use of a scoring function increases the range of runtimes but also increases significantly the median runtime. 
In this case, only about $90$\% of the iterations are solved in less than a second (for any scoring function).

\begin{figure}[tb!]
\begin{subfigure}[b]{0.5\textwidth}
    \centering
    \includegraphics[width=\textwidth]{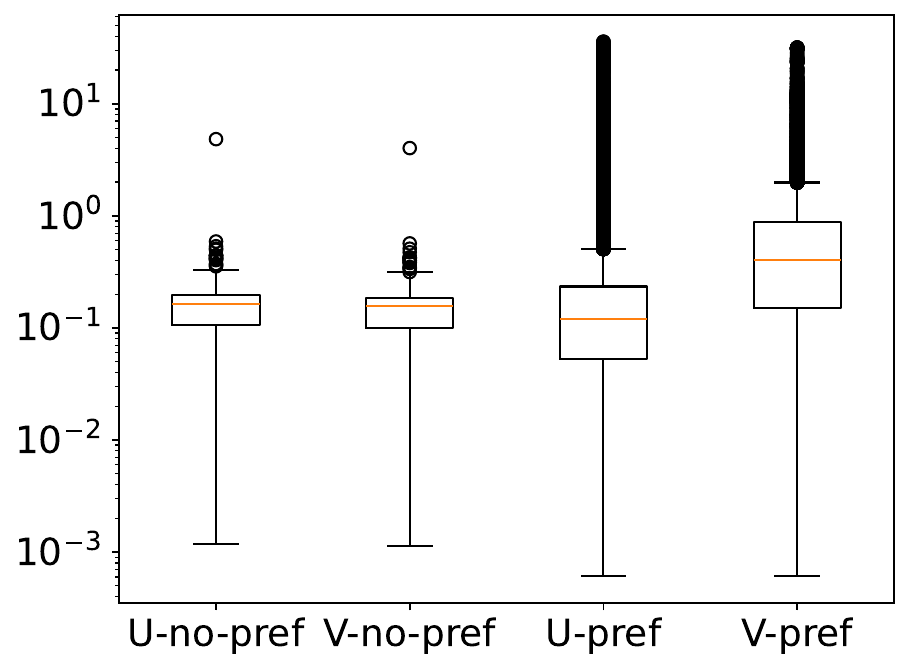}
    \caption{total runtime}
    \label{fig:dyn:runtime:box:pref}
\end{subfigure}
\begin{subfigure}[b]{0.5\textwidth}
    \centering
    \includegraphics[width=\textwidth]{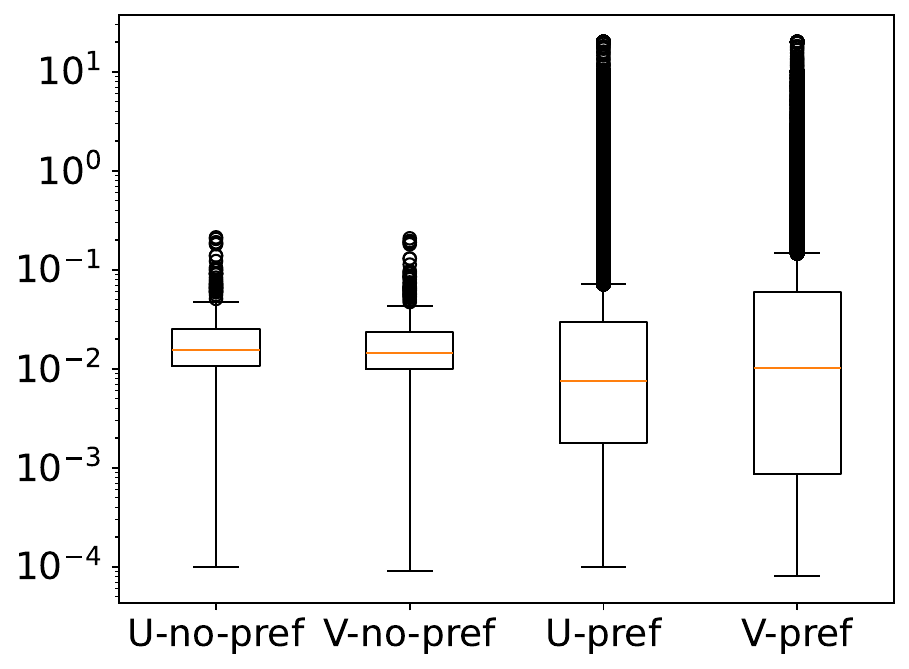}
    \caption{time used for solving \acp{ip}}
    \label{fig:dyn:opttime:box:pref}
\end{subfigure}
\caption{Runtime per iteration of the algorithm for dynamic PRA}
\label{fig:dyn:results:perIter:pref}
\end{figure}

To evaluate the impact of the time needed for computing $\wmin$ in every iteration, we depict in \cref{fig:dyn:opttime:box:pref} for every iteration the time that was needed to solve all \acp{ip} in that iteration.
We aggregate again the results for the different scoring functions into one boxplot as separate (log-scaled) boxplots are identical to the eye.
We observe that the difference between both versions is much smaller if we consider only the needed optimization time.
Hence, the computation of $\wmin$ using a minimum weighted perfect matching problem is the reason for the slow runtime when using \ac{ip} \ref{IP:Va}.
Our results show further that, if no scoring function is used, it is slightly faster to use \ac{ip} \ref{IP:Va} in the first step.
However, if roommate compatibility is optimized, then it is overall faster to use \ac{ip} \ref{IP:U} in the first step.
Remark that the results also show that there is a significant amount of instances where the using \ac{ip} \ref{IP:Va} in the first step is faster.
Therefore, we also evaluated whether using a rough estimate for $\wmin$ as bound leads to a computational advantage of \ac{ip} \ref{IP:Va} by using $2\cdot\wmin$ instead of $\wmin$.
The boxplots of the corresponding runtimes and optimization times per iteration, however, showed no noticeable differences to those depicted in \cref{fig:dyn:results:perIter:pref}.
Hence, the time needed to compute $\wmin$ in every iteration significantly impacts the iteration's runtime but it is not the decisive factor regarding the decision of using \ac{ip} \ref{IP:U} or \ref{IP:Va} in the first step.

In conclusion, we use the bound $\wmin$ to assess again the quality of the computed solutions:
Using \ac{ip} \ref{IP:U} or \ref{IP:Va} has no significant influence on the solution quality.
In both cases, the optimal overall compatibility score was achieved for the same $19$-$26$ instances, depending on the scoring function.
This result is especially impressive as the required amount of transfers and fulfilled single-room requests is comparable to the case where no compatibility of roommates is considered.
We further observe that for each chosen scoring function the same instances lead to high or low quality solutions with regard to roommate compatibility.
Determining the reasons behind this behaviour, however, is still an open question.

\section{Conclusion}\label{sec:conclusion}
In this paper, we presented novel combinatorial insights for the patient-to-room assignment problem with regard to choosing most compatible roommates.
We showed how an optimal partition of patients into roommates can be computed for wards with single and double rooms using known algorithms for the minimum weighted perfect matching problem.
A precondition is however that we can measure the compatibility of patients as roommates.
We gave an overview about criteria for high compatibility mentioned in literature, classified them, and proposed five specific scoring functions that we used in our computational studies.
The roommate compatibility of patients also links in with the ongoing scientific debate about the advantages and disadvantages of single vs. multiple bed room ward designs (e.g., see \cite{vandeGlind_2007,Persson_2012,Persson_2015}), which is not the focus of this work.
However, we would like to remark that many factors of considered advantages or disadvantages of multi-bed rooms, especially concerning social aspects, are highly dependent on the specific roommate chosen.

We further explored the performance of different \ac{ip} formulations for modelling patient compatibility.
Using all our insights, we integrated roommate compatibility into our \ac{ip}-based solution approach for \ac{dpra} which obtains high quality solutions in reasonable time.
One of our key insights here is that integrating our combinatorial insights regarding optimal compatibility of roommates is helpful to assess a solution's quality, but not for efficient optimization.
Another key insight is that $\fpref$ seems to take longer to optimize using \acp{ip} than $\fpriv$.
In general, the comparison of runtime and needed time for solving \acp{ip} showed that solving the \acp{ip} takes only a fraction of the overall runtime of an iteration.
Hence, the runtime can very likely be improved by choosing a more efficient implementation of the algorithm.
Our current implementation is designed to allow easy evaluation of different variants.
Optimizing the implementation with regard to the runtime is planned for future work as well as an evaluation of the performance of different solvers.

Overall, this paper provides a proof of concept that it is possible to efficiently integrate roommate compatibility into algorithms for \ac{pra} so that the runtime is suitable for real-world application, even if considering multiple objectives at the same time.

\section{Acknowledgements}
We thank the team at RWTH Aachen University hospital for their support.
We thank Jens Brandt for his input regarding efficient implementation and coding.
We further thank Daniel Müller for the extraordinary first take on patient compatibility in his master thesis. Although none of his results made it into the paper, it provided a good starting point for the research that lead to this paper.
Simulations were performed with computing resources granted by RWTH Aachen University.

\printbibliography
\end{document}